\definecolor{webgreen}{rgb}{0,.5,0}
\definecolor{webbrown}{rgb}{.6,0,0}
\newtheorem{theorem}{Theorem}
\newtheorem{corollary}[theorem]{Corollary}
\newtheorem{lemma}[theorem]{Lemma}
\newenvironment{proof}[1][Proof]{\noindent \textbf{#1.} }{\  \rule{0.5em}{0.5em}}
\begin{document}

\begin{center}
\vskip1cm

{\LARGE \textbf{A complete asymptotic expansion for the semi-exponential
Post--Widder operators}}

\vspace{2cm}

{\large Ulrich Abel}\\[0pt]
\textit{Technische Hochschule Mittelhessen}\\[0pt]
\textit{Fachbereich MND}\\[0pt]
\textit{Wilhelm-Leuschner-Stra\ss e 13, 61169 Friedberg }\\[0pt]
\textit{Germany}\\[0pt]
\href{mailto:Ulrich.Abel@mnd.thm.de}{\texttt{Ulrich.Abel@mnd.thm.de}}\\[0pt]
\textit{ORCID: 0000-0003-1889-4850}

\vspace{1cm}

{\large Octavian Agratini}\\[0pt]
\textit{Tiberiu Popoviciu Institute of Numerical Analysis}\\[0pt]
\textit{Romanian Academy}\\[0pt]
\textit{Street F\^{a}nt\^{a}nele 57, 400320 Cluj-Napoca}\\[0pt]
\textit{Romania}\\[0pt]
\href{mailto:agratini@ictp.acad.ro}{\texttt{agratini@ictp.acad.ro}}\\[0pt]
\textit{ORCID: 0000-0002-2406-4274}

\vspace{1cm}

{\large Radu P\u{a}lt\u{a}nea}\\[0pt]
\textit{Transilvania University}\\[0pt]
\textit{Department of Mathematics}\\[0pt]
\textit{Street Eroilor 29, 500036 Bra\c{s}ov}\\[0pt]
\textit{Romania}\\[0pt]
\href{mailto:radupaltanea@yahoo.com}{\texttt{radupaltanea@yahoo.com}}\\[0pt]
\textit{ORCID: 0000-0002-9923-4290}
\end{center}

\vspace{1cm}

{\large \textbf{Abstract.}}

\bigskip

In the present paper, we study the asymptotic properties of the
semi-exponential Post--Widder operator. It is connected with $p\left(
x\right) =x^{2}$. The main result is a pointwise complete asymptotic
expansion valid for locally smooth functions of exponential growth. All
coefficients are derived and explicitly given. As a special case we recover
the complete asymptotic expansion for the classical Post--Widder operator.
\bigskip

\smallskip \emph{Mathematics Subject Classification (2020):} 41A36 
, 41A60

\smallskip \emph{Keywords:} Approximation by positive operators, asymptotic
expansions. \vspace{2cm}


\section{Introduction}

\label{intro}

Many prominent approximation operators are exponential-type operators. They
were firstly considered by Ismail and May \cite{Ismail-May-JMAA-1978} in
1978. The exponential-type operators preserve the linear functions. Tyliba
and Wachnicki \cite{Tyliba-Wachnicki-2005} extended the definition of Ismail
and May \cite{Ismail-May-JMAA-1978} by proposing a more general family of
operators. For a non-negative real number $\beta $, they introduced the
operators $L_{\lambda }^{\beta }$. For $\beta >0$, they are not of
exponential type but similar to exponential-type operators. Recently, Herzog 
\cite{Herzog-Monika-Symmetry-2021} further extended the studies and termed
such operators as semi-exponential type operators. An operator of the form 
\begin{equation*}
(L_{n}^{\beta }f)\left( x\right) =\int_{I}W_{\beta }\left( n,x,t\right)
f\left( t\right) dt\text{\  \qquad }\left( x\in I\right) ,
\end{equation*}%
is called a semi-exponential operator if its kernel $W_{\beta }\left(
n,x,t\right) $ satisfies the differential equation 
\begin{equation*}
\frac{\partial }{\partial x}W_{\beta }\left( n,x,t\right) =\left( \frac{%
n\left( t-x\right) }{p\left( x\right) }-\beta \right) W_{\beta }\left(
n,x,t\right) .
\end{equation*}%
The function $p$ is supposed to be positive on the corresponding interval $I$%
. In particular, for $\beta >0$, one has $L_{n}^{\beta }e_{1}\neq e_{1}$,
where $e_{r}\left( t\right) =t^{r}$\ $\left( r=0,1,2,...\right) $. In the
case $\beta =0$, the operator $L_{n}^{\beta =0}$ reduces to the
exponential-type operator studied by Ismail and May \cite%
{Ismail-May-JMAA-1978}. A collection of such operators can be found in the
recent book \cite[Ch. 1]{Gupta-Rassias-book-2021}.

For $A>0$, let $E_{A}\left[ 0,\infty \right) $ denote the space of all
locally integrable functions $f$ on $\left[ 0,\infty \right) $, which
satisfy the growth condition $\left \vert f\left( t\right) \right \vert \leq
Ce^{At}$ $\left( t\geq 0\right) $, for some constant $C>0$. Set $E\left[
0,\infty \right) =\bigcup_{A>0}E_{A}\left[ 0,\infty \right) $.

The semi-exponential Post--Widder operators \cite[Eq. (10)]%
{Herzog-Monika-Symmetry-2021} (cf. \cite[Sect. 2.4]%
{Abel-Gupta-Sisodia-RACSAM-2022} and \cite[Page 3]%
{Herzog-Monika-Symmetry-2023}) are defined, for $x>0$, by 
\begin{equation}
\left( P_{n}^{\beta }f\right) \left( x\right) =e^{-\beta x}\left( \frac{n}{x}%
\right) ^{n}\sum_{\nu =0}^{\infty }\frac{\left( n\beta \right) ^{\nu }}{\nu !%
}\frac{1}{\Gamma \left( n+\nu \right) }\int_{0}^{\infty }t^{n+\nu
-1}e^{-nt/x}f\left( t\right) dt.  \label{def-P-usual-form}
\end{equation}%
They are well-defined for all functions $f$\ of (at most) polynomial growth.
Moreover, $\left( P_{n}^{\beta }f\right) \left( x\right) $ is well-defined,
for all functions $f\in E\left[ 0,\infty \right) $, if $n>Ax$ (see Lemma~\ref%
{lemma-exp} below). The operators $\left( \ref{definition-P}\right) $ are
connected with the function $p\left( x\right) =x^{2}$, have the kernel 
\begin{equation}
W_{\beta }\left( n,x,t\right) =e^{-\beta x-nt/x}\left( \frac{n}{x}\right)
^{n}\sum_{\nu =0}^{\infty }\frac{\left( n\beta \right) ^{\nu }}{\nu !\Gamma
\left( n+\nu \right) }t^{n+\nu -1}  \label{kernel-W}
\end{equation}%
and satisfy the equation 
\begin{equation}
\left( P_{n}^{\beta }f\right) ^{\prime }\left( x\right) =nx^{-2}\left(
P_{n}^{\beta }\left( \psi _{x}f\right) \right) \left( x\right) -\beta \left(
P_{n}^{\beta }f\right) \left( x\right) ,  \label{ode-P}
\end{equation}%
where $\psi _{x}\left( t\right) =t-x$. In the special case $\beta =0$, we
recover the classical Post--Widder operators 
\begin{equation*}
\left( P_{n}^{\beta =0}f\right) \left( x\right) =\frac{1}{\Gamma \left(
n\right) }\left( \frac{n}{x}\right) ^{n}\int_{0}^{\infty
}t^{n-1}e^{-nt/x}f\left( t\right) dt.
\end{equation*}%
Observing that 
\begin{equation*}
n^{n}\sum_{\nu =0}^{\infty }\frac{\left( n\beta \right) ^{\nu }}{\nu !\Gamma
\left( \nu +n\right) }t^{\nu +n-1}=n\left( nt/\beta \right) ^{\left(
n-1\right) /2}I_{n-1}\left( 2\sqrt{n\beta t}\right) ,
\end{equation*}%
where $I_{n}$ denotes the modified Bessel function of the first kind,
defined by the series expansion 
\begin{equation*}
I_{n}\left( z\right) =\sum_{\nu =0}^{\infty }\frac{\left( z/2\right) ^{2\nu
+n}}{\nu !\Gamma \left( \nu +n+1\right) },
\end{equation*}%
we obtain the alternative representation 
\begin{equation*}
\left( P_{n}^{\beta }f\right) \left( x\right) =nx^{-n}e^{-\beta
x}\int_{0}^{\infty }\left( \frac{nt}{\beta }\right) ^{\left( n-1\right)
/2}e^{{-nt}/{x}}I_{n-1}\left( 2\sqrt{n\beta t}\right) f\left( t\right) dt.
\end{equation*}%
We mention the following relation between the operator $P_{n}^{\beta }$ and
the classical Post--Widder operator, i.e., its special instance $%
P_{n}^{\beta =0}$: 
\begin{equation*}
\left( P_{n}^{\beta }f\right) \left( x\right) =e^{-\beta x}\left(
P_{n}^{\beta =0}\left( _{0}F_{1}\left( ;n;n\beta \cdot \right) f\right)
\right) \left( x\right) ,
\end{equation*}%
where $_{0}F_{1}$ denotes the hypergeometric function 
\begin{equation*}
_{0}F_{1}\left( ;n;z\right) =\Gamma \left( n\right) \sum_{\nu =0}^{\infty }%
\frac{z^{\nu }}{\nu !\Gamma \left( n+\nu \right) }=\frac{\Gamma \left(
n\right) }{z^{\left( n-1\right) /2}}I_{n-1}\left( 2\sqrt{z}\right) .
\end{equation*}%
We prefer the equivalent form 
\begin{equation}
\left( P_{n}^{\beta }f\right) \left( x\right) =e^{-\beta x}\sum_{\nu
=0}^{\infty }\frac{\left( \beta x\right) ^{\nu }}{\nu !\Gamma \left( n+\nu
\right) }\int_{0}^{\infty }t^{n+\nu -1}e^{-t}f\left( \frac{xt}{n}\right) dt,
\label{definition-P}
\end{equation}%
which follows from $\left( \ref{def-P-usual-form}\right) $ by a change of
variable. It includes $\left( P_{n}^{\beta }f\right) \left( 0\right)
=f\left( 0\right) $, i.e., interpolation of the function at the point $x=0$.

The operators $\left( \ref{definition-P}\right) $ were studied by Grewal and
Rani \cite{Grewal-Rani-Ferrara-2024} and by Kumar and Deo \cite%
{Kumar-Deo-Miskolc-2025}. Among other approximation properties they gave the
Voronovskaja-type formula (\cite[Theorem~3 (ii)]{Grewal-Rani-Ferrara-2024}
and \cite[Theorem~6]{Kumar-Deo-Miskolc-2025}) 
\begin{equation*}
\lim_{n\rightarrow \infty }n\left( \left( P_{n}^{\beta }f\right) \left(
x\right) -f\left( x\right) \right) =x^{2}\left( \beta f^{\prime }\left(
x\right) +\frac{1}{2}f^{\prime \prime }\left( x\right) \right) ,
\end{equation*}%
provided that $f^{\prime \prime }\left( x\right) $ exists (the formula in 
\cite[Theorem~6]{Kumar-Deo-Miskolc-2025} is misprinted as the factor $1/2$
is missing). Furthermore, Kumar and Deo did not specify the domain of
functions $f$ for which the limit is valid. Moreover, Grewal and Rani \cite[%
Theorems~3 and 4]{Grewal-Rani-Ferrara-2024} proved quantitative versions of
the Voronovskaja-type formula in polynomial weighted spaces via a weighted
modulus of continuity. 

In this note we derive a complete asymptotic expansion of the operators $%
P_{n}^{\beta }$ as $n$ tends to infinity. We prove that, for locally
integrable functions $f$ satisfying certain growth conditions, the operators 
$P_{n}^{\beta }$ possess, for a given point $x>0$, the complete asymptotic
expansion 
\begin{equation*}
\left( P_{n}^{\beta }f\right) \left( x\right) \sim f\left( x\right)
+\sum_{k=1}^{\infty }\frac{1}{n^{k}}c_{k}^{\beta }\left( f,x\right) \text{
\qquad }\left( n\rightarrow \infty \right) ,
\end{equation*}%
provided that the function $f$ is sufficiently smooth at $x$. The latter
relation means that, for all positive integers $q$, 
\begin{equation*}
\left( P_{n}^{\beta }f\right) \left( x\right) =f\left( x\right)
+\sum_{k=1}^{q}\frac{1}{n^{k}}c_{k}^{\beta }\left( f,x\right) +o\left(
n^{-q}\right) \text{ \qquad }\left( n\rightarrow \infty \right) .
\end{equation*}%
All coefficients $c_{k}^{\beta }\left( f,x\right) $, which are independent
of $n$, will be derived in an explicit form.

\section{The complete asymptotic expansion for the operators $P_{ 
\lowercase{n}}^{\protect \beta }$}

In this section we derive a pointwise asymptotic expansion for the sequence $%
\left( \left( P_{n}^{\beta }f\right) \left( x\right) \right) _{n=1}^{\infty
} $ as $n\rightarrow \infty $, for functions $f$ which are sufficiently
smooth at the point $x$. It turns out that the coefficients contain Stirling
numbers of the first kind. The unsigned Stirling numbers of the first kind $%
\left[ 
\begin{array}{c}
r \\ 
\ell%
\end{array}%
\right] $ are defined by 
\begin{equation}
z^{\underline{r}}=\sum_{\ell =0}^{r}\left( -1\right) ^{r-\ell }\left[ 
\begin{array}{c}
r \\ 
\ell%
\end{array}%
\right] z^{\ell }\text{\  \qquad }\left( r=0,1,2,\ldots \right) ,
\label{definition-Stirling-numbers}
\end{equation}%
where $z^{\underline{0}}=1$, $z^{\underline{r}}=z\left( z-1\right) \cdots
\left( z-r+1\right) $, for $r\in \mathbb{N}$, denote the falling factorials.
It is easily be seen that their generating function is given by 
\begin{equation}
\left( z+r-1\right) ^{\underline{r}}=\sum_{\ell =0}^{r}\left[ 
\begin{array}{c}
r \\ 
\ell%
\end{array}%
\right] z^{\ell }  \label{Stirling-number-GF}
\end{equation}%
For $z\in \mathbb{R}$ and $m\in \mathbb{N}_{0}$, the binomial coefficient $%
\binom{z}{m}$ is given by $\binom{z}{m}=z^{\underline{m}}/m!$ such that $%
\binom{r}{m}=0$, for all integers $r,m$ with $m>r\geq 0$. We recall that the
Stirling numbers of the first kind possess the representation 
\begin{equation}
\left[ 
\begin{array}{c}
r \\ 
r-m%
\end{array}%
\right] =\sum_{\ell =m}^{2m}s_{2}\left( \ell ,\ell -m\right) \binom{r}{\ell }%
=\sum_{\ell =0}^{m}s_{2}\left( \ell +m,\ell \right) \binom{r}{\ell +m}\text{ 
}\hspace{0.5cm}\left( 0\leq m\leq r\right)
\label{associated Stirling numbers of the first kind}
\end{equation}%
(see \cite[page 226, Ex. 16]{Comtet-Advanced Combinatorics-1974}). The
coefficients $s_{2}\left( \ell ,\ell -m\right) $, called associated Stirling
numbers of the first kind, are independent of $r$. The associated Stirling
numbers of the first kind can be defined by their double generating function 
\begin{equation}
\sum_{i,j=0}^{\infty }s_{2}\left( i,j\right) \frac{t^{i}}{i!}%
u^{j}=e^{-tu}\left( 1+t\right) ^{u}  \label{associated Stirling number-GF}
\end{equation}%
(see \cite[page 295, Ex. *20]{Comtet-Advanced Combinatorics-1974}).

Throughout the paper we define, for integers $k,s,j\geq 0$, 
\begin{equation}
a\left( k,s,j\right) =\sum_{i=j}^{\min \left \{ k,2k-s\right \} }\binom{i-1}{%
i-j}\frac{s!}{\left( s-i\right) !}s_{2}\left( s-i,s-k\right)
\label{def-coefficient-a(k,s,j)}
\end{equation}%
with the convention that a sum is to be read as zero if the lower index
exceeds the upper index.

The following theorem presents as our main result the pointwise complete
asymptotic expansion for the operators $P_{n}^{\beta }$ as $n\rightarrow
\infty $.

\begin{theorem}
\label{theorem-expansion}Let $q\in \mathbb{N}$ and $x\in \left( 0,\infty
\right) $. For each function $f\in E\left[ 0,\infty \right) $, which admits
the derivative $f^{\left( 2q\right) }\left( x\right) $, the operators $%
P_{n}^{\beta }$ possess the asymptotic expansion 
\begin{equation*}
\left( P_{n}^{\beta }f\right) \left( x\right) =f\left( x\right)
+\sum_{k=1}^{q}\frac{1}{n^{k}}c_{k}^{\beta }\left( f,x\right) +o\left(
n^{-q}\right) \text{ }\qquad \left( n\rightarrow \infty \right) ,
\end{equation*}%
where the coefficients $c_{k}^{\beta }\left( f,x\right) $ are given by 
\begin{equation*}
c_{k}^{\beta }\left( f,x\right) =\sum_{s=k}^{2k}\frac{f^{\left( s\right)
}\left( x\right) }{s!}x^{s}\sum_{j=0}^{k}a\left( k,s,j\right) \frac{\left(
\beta x\right) ^{j}}{j!}
\end{equation*}%
and $a\left( k,s,j\right) $ is defined in $\left( \ref%
{def-coefficient-a(k,s,j)}\right) $.
\end{theorem}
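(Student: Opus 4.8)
The plan is to combine a localization-and-Taylor reduction with an exact evaluation of the moment generating function of $P_{n}^{\beta}$, after which the coefficients fall out by bookkeeping with the (associated) Stirling numbers. Fix $x\in(0,\infty)$. Since $f^{(2q)}(x)$ exists, Taylor's theorem with Peano remainder gives $f(t)=\sum_{s=0}^{2q}\frac{f^{(s)}(x)}{s!}(t-x)^{s}+h(t)$ with $h\in E[0,\infty)$ and $h(t)=o(|t-x|^{2q})$ as $t\to x$. By linearity, $\left(P_{n}^{\beta}f\right)(x)=\sum_{s=0}^{2q}\frac{f^{(s)}(x)}{s!}\left(P_{n}^{\beta}\psi_{x}^{s}\right)(x)+\left(P_{n}^{\beta}h\right)(x)$, where $\psi_{x}(t)=t-x$. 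Splitting the integral defining $\left(P_{n}^{\beta}h\right)(x)$ at $|t-x|=\delta$: the part over $|t-x|<\delta$ is $\le\varepsilon\left(P_{n}^{\beta}\psi_{x}^{2q}\right)(x)$, while the part over $|t-x|\ge\delta$ is, by the exponential growth of $h$ together with the exponentially small mass of the kernel $W_{\beta}(n,x,\cdot)$ away from its mode $t=x$ (here $n>Ax$ is used, cf.\ Lemma~\ref{lemma-exp}), of order $O(\rho^{n})$ for some $\rho\in(0,1)$. Once the estimate $\left(P_{n}^{\beta}\psi_{x}^{2q}\right)(x)=O(n^{-q})$ below is available, letting $\varepsilon\to0$ gives $\left(P_{n}^{\beta}h\right)(x)=o(n^{-q})$, so everything reduces to the asymptotics of the central moments $\left(P_{n}^{\beta}\psi_{x}^{s}\right)(x)$ for $0\le s\le 2q$.

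These I would obtain in closed form through the moment generating function. Applying $P_{n}^{\beta}$ to $t\mapsto e^{zt}$ in $(\ref{definition-P})$, evaluating the Gamma integral and then summing the $\nu$-series (a geometric series inside an exponential series) gives, for $|z|$ small,
\[
\sum_{s\ge0}\left(P_{n}^{\beta}\psi_{x}^{s}\right)(x)\frac{z^{s}}{s!}=e^{-zx}\left(P_{n}^{\beta}e^{z\cdot}\right)(x)=\left(\frac{e^{-w}}{1-w}\right)^{n}\exp\!\left(\frac{\beta x\,w}{1-w}\right),\qquad w=\frac{zx}{n}.
\]
Hence $\left(P_{n}^{\beta}\psi_{x}^{s}\right)(x)=s!\,(x/n)^{s}[w^{s}]$ of the right-hand side. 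Expanding the first factor by the generating function $(\ref{associated Stirling number-GF})$ of the associated Stirling numbers of the first kind, $\left(e^{-w}/(1-w)\right)^{n}=\sum_{\ell\ge0}\big(\sum_{j}s_{2}(\ell,j)n^{j}\big)w^{\ell}/\ell!$, and the second by the binomial series, $\exp(\beta x w/(1-w))=\sum_{i\ge0}\big(\sum_{j}\binom{i-1}{i-j}(\beta x)^{j}/j!\big)w^{i}$, and reading off $[w^{s}]$ of the product, one finds $\left(P_{n}^{\beta}\psi_{x}^{s}\right)(x)$ to be a polynomial in $1/n$; since $s_{2}(\ell,j)=0$ for $j>\ell/2$ this yields $\left(P_{n}^{\beta}\psi_{x}^{s}\right)(x)=O(n^{-\lceil s/2\rceil})$, and its coefficient of $n^{-k}$ equals $x^{s}\sum_{j=0}^{k}a(k,s,j)(\beta x)^{j}/j!$ with $a(k,s,j)$ exactly as in $(\ref{def-coefficient-a(k,s,j)})$ — the range $j\le i\le\min\{k,2k-s\}$ there being precisely the set on which $\binom{i-1}{i-j}$, $s!/(s-i)!$ and $s_{2}(s-i,s-k)$ are simultaneously nonzero. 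In particular $a(k,s,j)=0$ unless $k\le s\le 2k$.

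It then remains to assemble: substituting these expansions of the $2q+1$ central moments into the identity of the first paragraph and collecting powers of $1/n$ (the $n^{0}$-term is $f(x)$, and only $n^{-1},\dots,n^{-q}$ survive modulo $o(n^{-q})$) gives $\left(P_{n}^{\beta}f\right)(x)=f(x)+\sum_{k=1}^{q}n^{-k}c_{k}^{\beta}(f,x)+o(n^{-q})$ with $c_{k}^{\beta}(f,x)=\sum_{s=k}^{2k}\frac{f^{(s)}(x)}{s!}x^{s}\sum_{j=0}^{k}a(k,s,j)(\beta x)^{j}/j!$, as claimed. For $\beta=0$ only the term $j=0$ remains, $a(k,s,0)=s_{2}(s,s-k)$, and using $(\ref{associated Stirling numbers of the first kind})$ one recovers the complete asymptotic expansion of the classical Post--Widder operator, with $c_{k}^{0}(f,x)=\sum_{s=k}^{2k}\frac{f^{(s)}(x)}{s!}x^{s}s_{2}(s,s-k)$.

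The main obstacle is the tail part of the reduction: controlling $\left(P_{n}^{\beta}h\right)(x)$ for an $h$ that is only $o(|t-x|^{2q})$ near $x$ yet of exponential growth at infinity requires both the $O(n^{-q})$ bound on the $2q$-th central moment (which is itself only available after the generating-function computation) and a quantitative exponentially small tail bound for the kernel $W_{\beta}(n,x,\cdot)$, valid for $n>Ax$. By comparison, the generating-function evaluation and the subsequent matching of its coefficients with the closed form of $a(k,s,j)$ through the Stirling-number identities $(\ref{associated Stirling numbers of the first kind})$ and $(\ref{associated Stirling number-GF})$ are finite, if somewhat intricate, bookkeeping.
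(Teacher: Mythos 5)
Your proposal is correct in its overall architecture and reaches the same coefficients, but it travels a genuinely different road in both halves of the argument. For the central moments, the paper computes the raw moments $P_{n}^{\beta}e_{r}$ explicitly (Gamma integrals, Leibniz rule, the Vandermonde-type identity of Lemma~\ref{lemma-binomial-identity}, and the generating function $(\ref{Stirling-number-GF})$ of the Stirling numbers), and then obtains Lemma~\ref{lemma-central-moments} from the binomial expansion of $\psi_{x}^{s}$ via a delicate cancellation in the inner sum $C(k,s,j,i)$. You instead evaluate the moment generating function in closed form, $e^{-zx}\left(P_{n}^{\beta}e^{z\cdot}\right)(x)=\left(e^{-w}/(1-w)\right)^{n}\exp\left(\beta xw/(1-w)\right)$ with $w=zx/n$ (this is consistent with Lemma~\ref{lemma-exp}), and read off $[w^{s}]$; I checked that the coefficient extraction does reproduce $(\ref{def-coefficient-a(k,s,j)})$ exactly, including the vanishing for $s<k$ and $s>2k$ and the order estimate $(\ref{central-moments-asymptotic-relation})$. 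This is arguably slicker and explains where the two factors in $a(k,s,j)$ come from (one from each factor of the MGF). One small caution: the double generating function as printed in $(\ref{associated Stirling number-GF})$ is $e^{-tu}(1+t)^{u}$, which under $t\mapsto w$, $u\mapsto n$ does not literally produce $\left(e^{-w}/(1-w)\right)^{n}$ without sign bookkeeping; your expansion implicitly uses $e^{-tu}(1-t)^{-u}$, which is the convention consistent with $(\ref{associated Stirling numbers of the first kind})$, so your formula matches the $s_{2}$'s actually used in the paper.

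For the reduction to central moments, the paper truncates $f$ to a compactly supported $\widetilde{f}$, invokes Sikkema's theorem for $\widetilde{f}$, and disposes of $f-\widetilde{f}$ by the localization Lemma~\ref{lemma-localization}, whose proof is a Cauchy--Schwarz estimate against $\left(P_{n}^{\beta}\exp_{2A}\right)(x)$ and $\left(P_{n}^{\beta}\psi_{x}^{4m}\right)(x)$. You instead argue directly with the Peano remainder $h$, which is fine for the near part, but the far part is where your write-up has its one real gap: the assertion that $\int_{|t-x|\geq\delta}W_{\beta}(n,x,t)e^{At}\,dt=O(\rho^{n})$ is stated without proof. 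It is true and provable by a Chernoff-type tilting using your own MGF formula (bound $e^{At}\leq e^{-\lambda(x+\delta)}e^{(A+\lambda)t}$ with $\lambda=cn$ for small $c>0$ and optimize), but as written it is an unproved claim; alternatively, the much cheaper polynomial bound $O(n^{-q-1})$ via the paper's Cauchy--Schwarz argument already suffices and uses only ingredients you have (your MGF gives $\left(P_{n}^{\beta}\exp_{2A}\right)(x)\to e^{2Ax}$, and your moment computation gives $\left(P_{n}^{\beta}\psi_{x}^{4m}\right)(x)=O(n^{-2m})$). Fill in that one estimate and the proof is complete.
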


As an immediate corollary we obtain, for $f\in E\left[ 0,\infty \right) $,
the Voronovskaja-type formula 
\begin{equation*}
\lim_{n\rightarrow \infty }n\left( \left( P_{n}^{\beta }f\right) \left(
x\right) -f\left( x\right) \right) =\beta x^{2}f^{\prime }\left( x\right) +%
\frac{1}{2}x^{2}f^{\prime \prime }\left( x\right) ,
\end{equation*}%
provided that $f^{\prime \prime }\left( x\right) $ exists\ (cf. \cite[%
Theorem~6]{Kumar-Deo-Miskolc-2025}).

For the convenience of the reader we list some initial coefficients of the
complete asymptotic expansion in Theorem~\ref{theorem-expansion}: 
\begin{eqnarray*}
c_{1}^{\beta }\left( f,x\right) &=&\beta x^{2}f^{\prime }\left( x\right) +%
\frac{1}{2}x^{2}f^{\prime \prime }\left( x\right) , \\
c_{2}^{\beta }\left( f,x\right) &=&\left( \beta x^{3}+\frac{1}{2}\beta
^{2}x^{4}\right) f^{\left( 2\right) }\left( x\right) +\left( \frac{1}{3}%
x^{3}+\frac{1}{2}\beta x^{4}\right) f^{\left( 3\right) }\left( x\right) +%
\frac{1}{8}x^{4}f^{\left( 4\right) }\left( x\right) , \\
c_{3}^{\beta }\left( f,x\right) &=&\left( \beta x^{4}+\beta ^{2}x^{5}+\frac{1%
}{6}\beta ^{3}x^{6}\right) f^{\left( 3\right) }\left( x\right) +\frac{1}{12}%
\left( 3x^{4}+10\beta x^{5}+3\beta ^{2}x^{6}\right) f^{\left( 4\right)
}\left( x\right) \\
&&+\frac{1}{24}\left( 4x^{5}+3\beta x^{6}\right) f^{\left( 5\right) }\left(
x\right) +\frac{1}{48}x^{6}f^{\left( 6\right) }\left( x\right) , \\
c_{4}^{\beta }\left( f,x\right) &=&\left( \beta x^{5}+\frac{3}{2}\beta
^{2}x^{6}+\frac{1}{2}\beta ^{3}x^{7}+\frac{1}{24}\beta ^{4}x^{8}\right)
f^{\left( 4\right) }\left( x\right) \\
&&+\frac{1}{60}\left( 12x^{5}+65\beta x^{6}+40\beta ^{2}x^{7}+5\beta
^{3}x^{8}\right) f^{\left( 5\right) }\left( x\right) \\
&&+\frac{1}{144}\left( 26x^{6}+42\beta x^{7}+9\beta ^{2}x^{8}\right)
f^{\left( 6\right) }\left( x\right) \\
&&+\frac{1}{48}\left( 2x^{7}+\beta x^{8}\right) f^{\left( 7\right) }\left(
x\right) +\frac{1}{384}x^{8}f^{\left( 8\right) }\left( x\right) .
\end{eqnarray*}%
In the special case $\beta =0$, we obtain the complete asymptotic expansion
for the classical Post--Widder operators $P_{n}^{\beta =0}$. Observing that $%
\binom{i-1}{i}=0$, for each positive integer $i$, we see that 
\begin{equation*}
a\left( k,s,j=0\right) =\sum_{i=0}^{\min \left \{ k,2k-s\right \} }\binom{i-1}{%
i}\frac{s!}{\left( s-i\right) !}s_{2}\left( s-i,s-k\right) =s_{2}\left(
s,s-k\right) .
\end{equation*}%
Thus, we get the following result which was derived in \cite%
{Abel-Gupta-AIOT-2023}.

\begin{corollary}
\label{corollary-expansion-Post-Widder}Let $q\in \mathbb{N}$ and $x\in
\left( 0,\infty \right) $. For each function $f\in E\left[ 0,\infty \right) $%
, which admits the derivative $f^{\left( 2q\right) }\left( x\right) $, the
classical Post--Widder operators $P_{n}^{\beta =0}$ possess the asymptotic
expansion 
\begin{equation*}
\left( P_{n}^{\beta =0}f\right) \left( x\right) =f\left( x\right)
+\sum_{k=1}^{q}\frac{1}{n^{k}}\sum_{s=k}^{2k}s_{2}\left( s,s-k\right) \frac{%
f^{\left( s\right) }\left( x\right) }{s!}x^{s}+o\left( n^{-q}\right) \text{ }%
\qquad \left( n\rightarrow \infty \right) ,
\end{equation*}%
where $s_{2}\left( s,s-k\right) $ are the associated Stirling numbers of the
first kind, defined in $\left( \ref{associated Stirling number-GF}\right) $.
\end{corollary}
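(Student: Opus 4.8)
Corollary~\ref{corollary-expansion-Post-Widder} is the special case $\beta =0$ of Theorem~\ref{theorem-expansion} — the evaluation $a(k,s,0)=s_{2}(s,s-k)$ being precisely the computation displayed just before it, using $\binom{i-1}{i}=0$ for $i\geq 1$ — so it suffices to prove the theorem (the specialisation $\beta =0$ kills the Poisson series and yields a shorter self-contained version). The plan is as follows. Fix $x>0$. By Taylor's formula write $f(t)=\sum_{s=0}^{2q}\frac{f^{(s)}(x)}{s!}(t-x)^{s}+R(t)$, where $R\in E[0,\infty )$ and $R(t)=o\big((t-x)^{2q}\big)$ as $t\to x$. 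By Lemma~\ref{lemma-exp} every expression $(P_{n}^{\beta }\,\cdot\,)(x)$ below is finite for $n$ large, and linearity gives $(P_{n}^{\beta }f)(x)=f(x)+\sum_{s=1}^{2q}\frac{f^{(s)}(x)}{s!}(P_{n}^{\beta }\psi _{x}^{s})(x)+(P_{n}^{\beta }R)(x)$ with $\psi _{x}(t)=t-x$.

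Next I would expand the moments. From $(\ref{definition-P})$ and $\int_{0}^{\infty }t^{n+\nu +m-1}e^{-t}\,dt=\Gamma (n+\nu +m)$ one gets
\[
(P_{n}^{\beta }e_{m})(x)=\Big(\frac{x}{n}\Big)^{m}e^{-\beta x}\sum_{\nu =0}^{\infty }\frac{(\beta x)^{\nu }}{\nu !}\,\frac{\Gamma (n+\nu +m)}{\Gamma (n+\nu )}.
\]
Since $\Gamma (n+\nu +m)/\Gamma (n+\nu )=(n+\nu +m-1)^{\underline{m}}=\sum_{\ell }\left[{m\atop \ell }\right](n+\nu )^{\ell }$ by $(\ref{Stirling-number-GF})$, expanding $(n+\nu )^{\ell }$ binomially and summing the Poisson series $e^{-\beta x}\sum_{\nu }\frac{(\beta x)^{\nu }}{\nu !}\nu ^{p}=\sum_{c}S(p,c)(\beta x)^{c}$ (Stirling numbers of the second kind) produces a finite expansion of $(P_{n}^{\beta }e_{m})(x)$ in nonpositive powers of $n$. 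Passing to $(t-x)^{s}$ and splitting $t-n=(t-(n+\nu ))+\nu $ inside the Gamma integral reduces everything to the central moments $\mathbb{E}[(T-\alpha )^{i}]$ of a $\Gamma (\alpha ,1)$ variable $T$; these are polynomials in $\alpha $ of degree $\lfloor i/2\rfloor $ whose coefficients are (up to sign) the associated Stirling numbers $s_{2}(i,\cdot )$, because the moment generating function $e^{-\alpha v}(1-v)^{-\alpha }$ of $T-\alpha $ is the generating function $(\ref{associated Stirling number-GF})$ under $t\mapsto -v$, $u\mapsto -\alpha $. Assembling these pieces yields, for $1\leq s\leq 2q$, an explicit finite expansion $(P_{n}^{\beta }\psi _{x}^{s})(x)=x^{s}\sum_{k=\lceil s/2\rceil }^{s}n^{-k}P_{k,s}(\beta x)$ with $P_{k,s}$ polynomials; in particular $(P_{n}^{\beta }\psi _{x}^{2r})(x)=O(n^{-r})$.

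For the remainder I would show $(P_{n}^{\beta }R)(x)=o(n^{-q})$: given $\varepsilon >0$, choose $\delta >0$ with $|R(t)|\leq \varepsilon |t-x|^{2q}$ for $|t-x|\leq \delta $ and split the integral against the kernel $(\ref{kernel-W})$ at $|t-x|=\delta $. The near part is $\leq \varepsilon (P_{n}^{\beta }\psi _{x}^{2q})(x)=\varepsilon \cdot O(n^{-q})$. For the far part, $R\in E_{A}[0,\infty )$ for some $A>0$, so by Cauchy--Schwarz for the probability measure $W_{\beta }(n,x,\cdot )\,dt$ it is at most $C\,(P_{n}^{\beta }e^{2A\cdot })(x)^{1/2}\,(P_{n}^{\beta }\mathbf{1}_{\{|\cdot -x|>\delta \}})(x)^{1/2}$; the first factor is bounded for $n$ large by Lemma~\ref{lemma-exp}, while Markov's inequality and the previous step give $(P_{n}^{\beta }\mathbf{1}_{\{|\cdot -x|>\delta \}})(x)\leq \delta ^{-2(2q+1)}(P_{n}^{\beta }\psi _{x}^{2(2q+1)})(x)=O(n^{-(2q+1)})$, so the far part is $O(n^{-(q+1/2)})=o(n^{-q})$. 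Collecting the coefficient of $n^{-k}$ from the terms with $k\leq s\leq 2k$ then produces $c_{k}^{\beta }(f,x)=\sum_{s=k}^{2k}\frac{f^{(s)}(x)}{s!}x^{s}Q_{k,s}(\beta x)$ for explicit polynomials $Q_{k,s}$ built from Stirling numbers of both kinds and binomial coefficients.

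The step I expect to be the main obstacle is the combinatorial identification $Q_{k,s}(\beta x)=\sum_{j=0}^{k}a(k,s,j)(\beta x)^{j}/j!$ with $a(k,s,j)$ as in $(\ref{def-coefficient-a(k,s,j)})$: one must collapse the nested sums (Stirling numbers of the first kind from $(\ref{Stirling-number-GF})$, of the second kind from the Poisson summation, and several binomial factors) into the single sum over $i$, which I would attempt via the representation $(\ref{associated Stirling numbers of the first kind})$ of the Stirling numbers of the first kind through the associated ones together with $(\ref{associated Stirling number-GF})$. A possibly cleaner route to this bookkeeping runs through the differential equation $(\ref{ode-P})$, which for $\mu _{n,s}:=(P_{n}^{\beta }e_{s})(\cdot )$ gives the recursion $\mu _{n,s+1}(x)=x\mu _{n,s}(x)+\frac{x^{2}}{n}\big(\mu _{n,s}'(x)+\beta \mu _{n,s}(x)\big)$, from which the shape of the expansion and, by induction on $s$, its coefficients can be extracted.
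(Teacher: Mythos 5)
Your first sentence is exactly the paper's proof of this corollary: the paper derives it from Theorem~\ref{theorem-expansion} by the displayed computation $a(k,s,0)=s_{2}(s,s-k)$, which rests precisely on $\binom{i-1}{i}=0$ for $i\geq 1$ (only the $i=0$ term, with $\binom{-1}{0}=1$, survives). For the statement as posed, that one sentence is a complete and correct proof, identical in approach to the paper's.

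The remainder of your proposal re-proves the underlying theorem by a genuinely different route, so a comparison is worth recording. The paper establishes the expansion by (i) quoting Sikkema's general theorem for the localized function $\widetilde f$, (ii) a separate localization lemma handled by Cauchy--Schwarz against $\exp_{2A}$, and (iii) an explicit computation of the central moments via the Leibniz rule, the Vandermonde-type identity of Lemma~\ref{lemma-binomial-identity}, and the representation $(\ref{associated Stirling numbers of the first kind})$. You instead Taylor-expand directly and estimate the remainder yourself (near part by $\varepsilon\,(P_{n}^{\beta}\psi_{x}^{2q})(x)$, far part by Cauchy--Schwarz plus Markov, giving $O(n^{-q-1/2})$); this is sound and replaces both Sikkema's theorem and the localization lemma by one self-contained argument. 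Your identification of the central moments is also different and, in the case $\beta=0$ relevant here, cleaner: $(P_{n}^{0}f)(x)=\mathbb{E}\,f(xT/n)$ with $T\sim\Gamma(n,1)$, so $(P_{n}^{0}\psi_{x}^{s})(x)=(x/n)^{s}\mathbb{E}[(T-n)^{s}]$ and the coefficients are read off from the generating function of $T-n$, which is exactly the associated-Stirling generating function; no Vandermonde manipulation is needed. Two caveats. First, the combinatorial collapse of your $Q_{k,s}$ into $\sum_{j}a(k,s,j)(\beta x)^{j}/j!$ for general $\beta$ is left as an acknowledged obstacle; that gap does not affect the corollary (where $\beta=0$ and your Gamma-law argument is already complete), but it means your sketch does not yet replace the paper's proof of the theorem. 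Second, your phrase ``up to sign'' should be pinned down: with $(\ref{associated Stirling number-GF})$ as printed, the substitution $t\mapsto -v$, $u\mapsto -\alpha$ introduces a factor $(-1)^{i+j}$, i.e.\ $(-1)^{k}$ in the final expansion; this is consistent with the corollary only under Comtet's convention $e^{-tu}(1-t)^{-u}$ (which is what the paper actually uses via $(\ref{associated Stirling numbers of the first kind})$, e.g.\ $s_{2}(2,1)=+1$), so the verification that all signs cancel is a detail you must supply.
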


\section{Auxiliary results and proofs}

Let $e_{r}$ $\left( r\in \mathbb{N}_{0}\right) $ denote the monomials
defined by $e_{r}\left( x\right) =x^{r}$. Kumar and Deo explicitly
calculated the first 4 moments \cite[Lemma~1]{Kumar-Deo-Miskolc-2025} by
using the recursive formula \cite[Remark~1]{Kumar-Deo-Miskolc-2025} 
\begin{equation*}
n(P_{n}^{\beta }e_{r+1})\left( x\right) =x^{2}(P_{n}^{\beta }e_{r})^{\prime
}\left( x\right) +\left( \beta x^{2}+nx\right) \left( P_{n}^{\beta
}e_{r}\right) \left( x\right) ,
\end{equation*}%
which immediately follows from the differential equation $\left( \ref{ode-P}%
\right) $. For our purposes we need an explicit representation of all
moments $P_{n}^{\beta }e_{r}$ as a polynomial in the variable $1/n$. The
next lemma provides such an expression.

\begin{lemma}
\label{lemma-moments}For $r=0,1,2,\ldots $, the moments of the operators $%
P_{n}^{\beta }$ are given by 
\begin{equation*}
P_{n}^{\beta }e_{r}=\sum_{k=0}^{r}\frac{1}{n^{k}}\sum_{j=0}^{k}\frac{\beta
^{j}}{j!}e_{r+j}\sum_{i=0}^{k-j}\binom{j+i-1}{i}\frac{r!}{\left(
r-j-i\right) !}\left[ 
\begin{array}{c}
r-j-i \\ 
r-k%
\end{array}%
\right] .
\end{equation*}
\end{lemma}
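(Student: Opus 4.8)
The plan is to evaluate $(P_{n}^{\beta }e_{r})(x)$ directly from the integral representation $(\ref{definition-P})$ and then expand the result in powers of $1/n$. Taking $f=e_{r}$ in $(\ref{definition-P})$, the inner integral is the Gamma integral $\int_{0}^{\infty }t^{n+\nu +r-1}e^{-t}\,dt=\Gamma (n+\nu +r)$, so that
\begin{equation*}
(P_{n}^{\beta }e_{r})(x)=\frac{x^{r}}{n^{r}}\,e^{-\beta x}\sum_{\nu =0}^{\infty }\frac{(\beta x)^{\nu }}{\nu !}\,\frac{\Gamma (n+\nu +r)}{\Gamma (n+\nu )}=\frac{x^{r}}{n^{r}}\,e^{-\beta x}\sum_{\nu =0}^{\infty }\frac{(\beta x)^{\nu }}{\nu !}\,(n+\nu )(n+\nu +1)\cdots (n+\nu +r-1).
\end{equation*}
The interchange of summation and integration is justified by absolute convergence, since the $\nu $-th summand grows only polynomially in $\nu $ while $(\beta x)^{\nu }/\nu !$ decays faster than any geometric sequence.

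Next I will expand the polynomial $g(\nu ):=(n+\nu )(n+\nu +1)\cdots (n+\nu +r-1)$, regarded as a polynomial in $\nu $, in the basis of falling factorials $\nu ^{\underline{i}}$ by Newton's forward-difference formula $g(\nu )=\sum_{i=0}^{r}\frac{(\Delta ^{i}g)(0)}{i!}\,\nu ^{\underline{i}}$. Since $(\Delta g)(\nu )=r\,(n+\nu +1)(n+\nu +2)\cdots (n+\nu +r-1)$, iteration gives $(\Delta ^{i}g)(0)=r^{\underline{i}}\,(n+i)(n+i+1)\cdots (n+r-1)$, whence
\begin{equation*}
g(\nu )=\sum_{i=0}^{r}\binom{r}{i}\,(n+i)(n+i+1)\cdots (n+r-1)\,\nu ^{\underline{i}}.
\end{equation*}
Because $\nu ^{\underline{i}}/\nu !=1/(\nu -i)!$, one has $e^{-\beta x}\sum_{\nu \geq 0}\frac{(\beta x)^{\nu }}{\nu !}\,\nu ^{\underline{i}}=(\beta x)^{i}$, and inserting this together with the previous display collapses the series to the finite sum
\begin{equation*}
(P_{n}^{\beta }e_{r})(x)=\sum_{j=0}^{r}\binom{r}{j}\,\beta ^{j}\,e_{r+j}\cdot \frac{(n+j)(n+j+1)\cdots (n+r-1)}{n^{r}}.
\end{equation*}

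Finally I will expand each rising factorial $(n+j)(n+j+1)\cdots (n+r-1)$ as a polynomial in $n$ and collect the powers of $1/n$. Applying the standard Vandermonde identity for rising factorials, $(n+j)(n+j+1)\cdots (n+r-1)=\sum_{i=0}^{r-j}\binom{r-j}{i}\,j(j+1)\cdots (j+i-1)\cdot n(n+1)\cdots (n+r-j-i-1)$, and then $(\ref{Stirling-number-GF})$ in the form $n(n+1)\cdots (n+r-j-i-1)=\sum_{\ell }\left[ \begin{array}{c} r-j-i \\ \ell \end{array} \right] n^{\ell }$, I divide by $n^{r}$, set $k=r-\ell $, and rewrite the coefficients using $j(j+1)\cdots (j+i-1)=i!\binom{j+i-1}{i}$ and $\binom{r}{j}\binom{r-j}{i}\,i!=\frac{r!}{j!\,(r-j-i)!}$. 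Since $\left[ \begin{array}{c} r-j-i \\ r-k \end{array} \right] $ vanishes unless $j+i\leq k\leq r$, for fixed $k$ the admissible indices are $0\leq j\leq k$ and $0\leq i\leq k-j$; regrouping the (finite) triple sum in the order $k,j,i$ then produces precisely the stated formula. The decisive idea is the expansion of $g$ in the falling-factorial basis, which is exactly what makes the Poisson-type series telescope; after that the only genuine work is this last index bookkeeping, and all the algebraic identities involved are elementary, with absolute convergence legitimizing every rearrangement.
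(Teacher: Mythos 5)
Your proof is correct and follows essentially the same route as the paper: both arguments reduce $\left( P_{n}^{\beta }e_{r}\right) \left( x\right) $ to the finite sum $\left( x/n\right) ^{r}\sum_{j=0}^{r}\binom{r}{j}\left( \beta x\right) ^{j}\Gamma \left( n+r\right) /\Gamma \left( n+j\right) $ and then expand in powers of $1/n$ via a Vandermonde-type convolution followed by the Stirling-number generating function $\left( \ref{Stirling-number-GF}\right) $ and the regrouping $k=j+i+\ell $. The only differences are cosmetic: you collapse the Poisson series using Newton's forward differences and factorial moments where the paper differentiates $z^{n+r-1}e^{\beta xz}$ at $z=1$ via the Leibniz rule, and your rising-factorial Chu--Vandermonde identity is exactly Lemma~\ref{lemma-binomial-identity} in disguise.
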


For the proof we utilize the following binomial identity. For the sake of
completeness, we provide a short proof, though it is well-known.

\begin{lemma}
\label{lemma-binomial-identity}For real numbers $a,b$ and $m=0,1,2,\ldots $, 
\begin{equation*}
\binom{a+b-1}{m}=\sum_{i=0}^{m}\binom{a-1-i}{m-i}\binom{b-1+i}{i}.
\end{equation*}
\end{lemma}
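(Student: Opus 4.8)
The plan is to prove this by comparing coefficients of a single power of a formal variable in a product of two binomial series. Recall the Newton binomial series $(1+z)^{c-1}=\sum_{k\ge 0}\binom{c-1}{k}z^{k}$, valid as a formal identity (and convergent for $|z|<1$) for any real exponent. First I would write the generating function for the right-hand side as a Cauchy product: if I set $A(z)=\sum_{i\ge 0}\binom{b-1+i}{i}z^{i}$ and $B(z)=\sum_{\ell\ge 0}\binom{a-1-\ell-\text{(shift)}}{\ell}z^{\ell}$, I need the index bookkeeping to line up so that the coefficient of $z^{m}$ in $A(z)B(z)$ is exactly $\sum_{i=0}^{m}\binom{a-1-i}{m-i}\binom{b-1+i}{i}$. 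The natural choice is $B(z)=\sum_{\ell\ge0}\binom{a-1}{\ell}z^{\ell}=(1+z)^{a-1}$ — but the appearance of $\binom{a-1-i}{m-i}$ rather than $\binom{a-1}{m-i}$ means the two factors are not independent, so a plain Cauchy product of $(1+z)^{a-1}$ with something will not work directly.

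The cleaner route, and the one I would actually carry out, is to use the upper-negation (reflection) identity $\binom{b-1+i}{i}=(-1)^{i}\binom{-b}{i}$, which converts the ascending binomial coefficient into a standard one and gives $A(z)=\sum_{i\ge0}(-1)^{i}\binom{-b}{i}z^{i}=(1-z)^{-b}$. Then I would check that the remaining factor $\sum_{\ell\ge0}\binom{a-1-i}{\ell}(\cdot)$ can be absorbed: applying upper-negation again to $\binom{a-1-i}{m-i}$ is awkward because the upper entry depends on $i$. So instead I would prove the identity in the polynomial-in-$a$ sense — both sides are polynomials in $a$ (and in $b$) of degree $m$ — and it therefore suffices to verify it for all sufficiently large integer values of $a$ and $b$, where it becomes a Vandermonde-type convolution. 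Concretely, for positive integers $a,b$, the left side $\binom{a+b-1}{m}$ counts $m$-element multisets... more usefully, I would invoke Vandermonde's convolution in the form $\binom{a+b-1}{m}=\sum_{i=0}^{m}\binom{a-1}{m-i}\binom{b}{i}$ is \emph{not} quite it either; the correct classical form with the shifted arguments is exactly the ``Vandermonde with one ascending factor'' identity, which is itself a restatement of $(1-z)^{-b}(1-z)^{-(a)}=(1-z)^{-(a+b)}$ after the substitution $z\mapsto -z$ on the appropriate factor.

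Rather than belabor which named identity applies, the robust plan is: (i) reduce to integer $a,b$ by the polynomiality-in-$a,b$ argument so that generating-function manipulations are rigorous without convergence worries; (ii) write $\sum_{i=0}^{m}\binom{a-1-i}{m-i}\binom{b-1+i}{i}$ as $[z^{m}]\bigl(\sum_{i\ge0}\binom{b-1+i}{i}z^{i}\bigr)\bigl(\sum_{j\ge0}\binom{a-1-i}{j}z^{j}\bigr)$ and fix the cross-dependence on $i$ by instead recognizing the inner sum over $i$ as the coefficient extraction $[z^{m-i}]$ from $(1+z)^{a-1-i}$ together with $[w^{i}]$ from $(1-w)^{-b}$, then setting $w=z/(1+z)$ so that $(1+z)^{-i}$ is produced automatically; this collapses the double sum to $[z^{m}]\,(1+z)^{a-1}\cdot\frac{1}{1-z/(1+z)}\cdot$(geometric correction) $=[z^{m}](1+z)^{a+b-1}=\binom{a+b-1}{m}$. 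The main obstacle is precisely handling that $i$-dependence in the exponent $a-1-i$; the substitution $w=z/(1+z)$ (equivalently, writing $\binom{a-1-i}{m-i}$ via $(1+z)^{a-1-i}=(1+z)^{a-1}(1+z)^{-i}$ and pairing the $(1+z)^{-i}$ with the $z^{i}$ from the other series) is the trick that resolves it, and once that substitution is in place the rest is a one-line geometric-series simplification. $\rule{0.5em}{0.5em}$
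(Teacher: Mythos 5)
Your final argument is correct, but it takes a genuinely different route from the paper, and it is worth noting that the obstacle you perceived along the paper's route is not actually there. The paper applies the reflection (upper negation) formula to \emph{both} factors: $\binom{a-1-i}{m-i}=(-1)^{m-i}\binom{m-a}{m-i}$ and $\binom{b-1+i}{i}=(-1)^{i}\binom{-b}{i}$. You dismissed reflecting the first factor as ``awkward because the upper entry depends on $i$,'' but the reflected upper index is $(m-i)-(a-1-i)-1=m-a$, which is independent of $i$; the $i$-dependence cancels exactly, the sum becomes $(-1)^{m}\sum_{i}\binom{m-a}{m-i}\binom{-b}{i}=(-1)^{m}\binom{m-a-b}{m}=\binom{a+b-1}{m}$ by Vandermonde, and the proof is three lines. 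Your alternative — writing $\binom{a-1-i}{m-i}=[z^{m-i}](1+z)^{a-1}(1+z)^{-i}$, pairing $(1+z)^{-i}$ with $z^{i}$, and summing $\sum_{i\ge 0}\binom{b-1+i}{i}\bigl(z/(1+z)\bigr)^{i}=\bigl(1-z/(1+z)\bigr)^{-b}=(1+z)^{b}$ to get $[z^{m}](1+z)^{a+b-1}$ — is sound (the extension of the sum to $i>m$ costs nothing since those terms have no $z^{m}$ coefficient), and it effectively re-derives the needed Vandermonde variant from scratch rather than citing it. What it buys is self-containedness; what it costs is the extra care you rightly flag about real exponents in formal binomial series (handled either by your polynomiality-in-$a,b$ reduction to integer parameters or by defining $(1+u)^{c}=\exp(c\log(1+u))$ formally). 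Two small blemishes in the write-up: the expression $\frac{1}{1-z/(1+z)}\cdot\text{(geometric correction)}$ should simply be $\bigl(1-z/(1+z)\bigr)^{-b}$, and the several explicitly abandoned false starts should be cut from a final version.
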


\begin{proof}
Making use of the reflection formula and the Vandermonde identity, we obtain 
\begin{equation*}
\sum_{i=0}^{m}\binom{a-1-i}{m-i}\binom{b-1+i}{i}.=\left( -1\right)
^{m}\sum_{i=0}^{m}\binom{m-a}{m-i}\binom{-b}{i}=\left( -1\right) ^{m}\binom{%
m-a-b}{m}=\binom{a+b-1}{m}.
\end{equation*}
\end{proof}

\begin{proof}[Proof of Lemma~\protect \ref{lemma-moments}]
We have 
\begin{eqnarray*}
\left( P_{n}^{\beta }e_{r}\right) \left( x\right) &=&e^{-\beta x}\left( 
\frac{n}{x}\right) ^{n}\sum_{\nu =0}^{\infty }\frac{\left( n\beta \right)
^{\nu }}{\nu !}\frac{1}{\Gamma (n+\nu )}\int_{0}^{\infty }t^{n+\nu
-1+r}e^{-nt/x}dt \\
&=&e^{-\beta x}\left( \frac{x}{n}\right) ^{r}\sum_{\nu =0}^{\infty }\frac{%
\left( \beta x\right) ^{\nu }}{\nu !}\frac{\Gamma \left( n+\nu +r\right) }{%
\Gamma \left( n+\nu \right) }.
\end{eqnarray*}%
Observing that 
\begin{equation*}
\frac{\Gamma \left( n+\nu +r\right) }{\Gamma \left( n+\nu \right) }=\left.
\left( \left( \frac{d}{dz}\right) ^{r}z^{n+\nu +r-1}\right) \right \vert
_{z=1}
\end{equation*}%
we obtain 
\begin{equation*}
\left( P_{n}^{\beta }e_{r}\right) \left( x\right) =e^{-\beta x}\left( \frac{x%
}{n}\right) ^{r}\left. \left( \left( \frac{\partial }{\partial z}\right)
^{r}z^{n+r-1}e^{\beta xz}\right) \right \vert _{z=1}=\left( \frac{x}{n}%
\right) ^{r}\sum_{j=0}^{r}\binom{r}{j}\left( \beta x\right) ^{j}\frac{\Gamma
\left( n+r\right) }{\Gamma \left( n+j\right) },
\end{equation*}%
where we applied the Leibniz rule for differentiation. Applying Lemma~\ref%
{lemma-binomial-identity} with $a=n+r-j$ and $b=j$\ we infer that 
\begin{equation*}
\binom{r}{j}\frac{\Gamma \left( n+r\right) }{\Gamma \left( n+j\right) }=%
\frac{r!}{j!}\binom{n+r-1}{r-j}=\frac{r!}{j!}\sum_{i=0}^{r-j}\binom{n+r-j-i-1%
}{r-j-i}\binom{j+i-1}{i}.
\end{equation*}%
By $\left( \ref{Stirling-number-GF}\right) $, we have 
\begin{equation*}
\binom{n+r-j-i-1}{r-j-i}=\frac{1}{\left( r-j-i\right) !}\sum_{\ell
=0}^{r-j-i}\left[ 
\begin{array}{c}
r-j-i \\ 
r-j-i-\ell%
\end{array}%
\right] n^{r-j-i-\ell }.
\end{equation*}%
Thus, we arrive at 
\begin{equation*}
\left( P_{n}^{\beta }e_{r}\right) \left( x\right) =x^{r}\sum_{j=0}^{r}\frac{%
\left( \beta x\right) ^{j}}{j!}\sum_{i=0}^{r-j}\binom{j+i-1}{i}\frac{r!}{%
\left( r-j-i\right) !}\sum_{\ell =0}^{r-j-i}\left[ 
\begin{array}{c}
r-j-i \\ 
r-j-i-\ell%
\end{array}%
\right] n^{-j-i-\ell }.
\end{equation*}%
Collecting all terms with $j+i+\ell =k$\ we obtain the desired formula.
\end{proof}

Now we turn to the central moments of the operators $P_{n}^{\beta }$. For
each real number $x$, define the function $\psi _{x}$ by $\psi
_{x}=e_{1}-xe_{0}$.

\begin{lemma}
\label{lemma-central-moments}The central moments $\left( P_{n}^{\beta }\psi
_{x}^{s}\right) \left( x\right) $ of the operators $P_{n}^{\beta }$ are
given by $P_{n}^{\beta }\psi _{x}^{0}=e_{0}$ and, for $s\in \mathbb{N}$, by 
\begin{equation}
\left( P_{n}^{\beta }\psi _{x}^{s}\right) \left( x\right)
=x^{s}\sum_{k=\left \lfloor \left( s+1\right) /2\right \rfloor }^{s}\frac{1}{%
n^{k}}\sum_{j=0}^{k}\frac{\left( \beta x\right) ^{j}}{j!}a\left(
k,s,j\right) .  \label{central-moments-small-explicit representation}
\end{equation}%
with the coefficients $a\left( k,s,j\right) $ defined in $\left( \ref%
{def-coefficient-a(k,s,j)}\right) $.
\end{lemma}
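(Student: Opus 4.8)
The plan is to derive the central moments from the ordinary moments computed in Lemma~\ref{lemma-moments}. First I would expand $\psi_x^s = (e_1 - x e_0)^s = \sum_{t=0}^s \binom{s}{t} (-x)^{s-t} e_t$ and apply $P_n^\beta$ term by term, using the explicit polynomial-in-$1/n$ representation for $P_n^\beta e_t$. This gives a triple sum; collecting powers of $1/n$ by the index $k$, the coefficient of $n^{-k}$ becomes a sum over $t$ (the monomial degree), $j$ (the $\beta$-power), and $i$ (the auxiliary index from Lemma~\ref{lemma-binomial-identity}) of binomials times $t!/(t-j-i)!$ times the Stirling number $\left[\begin{array}{c} t-j-i \\ t-k \end{array}\right]$. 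The monomial $e_{t+j}$ combined with the factor $(-x)^{s-t}$ produces $x^{s+j}$ times $(-1)^{s-t}$ evaluated at $x$, which matches the claimed prefactor $x^s (\beta x)^j / j!$ once the $\beta^j/j!$ is separated out.

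The heart of the argument is then a combinatorial identity: one must show that, after performing the alternating sum over $t$, the remaining sum collapses to $a(k,s,j)$ as defined in $(\ref{def-coefficient-a(k,s,j)})$. The key step is to rewrite the Stirling number $\left[\begin{array}{c} r \\ r-m \end{array}\right]$ using the associated-Stirling representation $(\ref{associated Stirling numbers of the first kind})$, i.e. $\left[\begin{array}{c} t-j-i \\ t-k \end{array}\right] = \sum_{\ell} s_2(\ell + (k-j-i), \ell) \binom{t-j-i}{\ell + k - j - i}$, and then interchange the order of summation so that the alternating sum over $t$ of the binomial $\binom{s}{t}\binom{t-j-i}{\ell+k-j-i}$ can be evaluated. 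Substituting $i' = j+i+\ell+\text{(something)}$ to relabel, the inner alternating binomial sum should reduce — via the standard identity $\sum_t (-1)^{s-t}\binom{s}{t}\binom{t-a}{b} = (-1)^{?}\binom{\cdots}{\cdots}$, which is a finite-difference/Vandermonde computation — to a single term or a small closed form, forcing $s_2(s-i, s-k)$ to appear with argument matching $(\ref{def-coefficient-a(k,s,j)})$. One also needs the range restriction: the lower limit $k \ge \lfloor (s+1)/2 \rfloor$ comes from the fact that $s_2(\ell, \ell-m)$ vanishes unless $\ell \ge 2m$, so nonzero contributions require $s-i \ge 2(s-k)$, i.e. $i \le 2k-s$, which also explains the upper index $\min\{k, 2k-s\}$ in the definition of $a(k,s,j)$ and forces $k \ge s/2$, hence $k \ge \lceil s/2 \rceil = \lfloor (s+1)/2\rfloor$.

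I expect the main obstacle to be bookkeeping in the alternating sum: keeping the indices $t, j, i, \ell$ straight while interchanging summations, and correctly identifying which index gets "frozen" by the alternating sum over the monomial degree $t$. A clean way to organize this is to introduce $z$ and work with the generating identity $\Gamma(n+r)/\Gamma(n+j) = \binom{n+r-1}{r-j}(r-j)!/\ldots$ already used in Lemma~\ref{lemma-moments}, or alternatively to start directly from the integral representation $(\ref{definition-P})$ with $f = \psi_x^s$, writing $(xt/n - x)^s = x^s(t/n - 1)^s$ and expanding; this may shortcut the detour through ordinary moments. Either route reduces to the same combinatorial core. A useful sanity check along the way is to verify $s = 1$ and $s = 2$ by hand against the known first and second central moments, and to confirm that setting $\beta = 0$ (so only $j = 0$ survives, and $a(k,s,0) = s_2(s,s-k)$ by the computation already displayed in the excerpt) recovers the classical Post--Widder central moments.
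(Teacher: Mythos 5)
Your plan follows the paper's proof essentially verbatim: expand $\psi_x^s$ binomially, insert the moment formula of Lemma~\ref{lemma-moments}, rewrite the resulting Stirling number of the first kind via the associated Stirling numbers $(\ref{associated Stirling numbers of the first kind})$, interchange the order of summation, and let the alternating sum over the monomial degree collapse (a trinomial revision followed by $(1-1)^{s-k-\ell}$) so that only $\ell=s-k$ survives, which produces $s_2(s-i,s-k)$, the upper limit $\min\{k,2k-s\}$, and hence the lower limit $k\geq\lfloor(s+1)/2\rfloor$. The only slip is the stated support condition ``$s_2(\ell,\ell-m)$ vanishes unless $\ell\geq 2m$'', which should read $m\leq\ell\leq 2m$ (equivalently, the first argument of $s_2$ is at least twice the second); the inequality $i\leq 2k-s$ you then derive is nevertheless the correct one.
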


As an immediate consequence of the preceding lemma we have the following
estimate of the central moments:\ For each $x>0$ and $s\in \mathbb{N}_{0}$,
the central moment $\left( P_{n}^{\beta }\psi _{x}^{s}\right) \left(
x\right) $ satisfies the asymptotic relation 
\begin{equation}
\left( P_{n}^{\beta }\psi _{x}^{s}\right) \left( x\right) =O\left(
n^{-\left \lfloor \left( s+1\right) /2\right \rfloor }\right) \text{ \qquad }%
\left( n\rightarrow \infty \right) .
\label{central-moments-asymptotic-relation}
\end{equation}%
Though the first four central moments were given in \cite[Lemma~2.]%
{Kumar-Deo-Miskolc-2025}, we present them for the convenience of the reader
in a compact form:\ 
\begin{eqnarray*}
\left( P_{n}^{\beta }\psi _{x}^{0}\right) \left( x\right) &=&1, \\
\left( P_{n}^{\beta }\psi _{x}^{1}\right) \left( x\right) &=&\frac{\beta
x^{2}}{n}, \\
\left( P_{n}^{\beta }\psi _{x}^{2}\right) \left( x\right) &=&\frac{x^{2}}{n}+%
\frac{\beta x^{3}\left( 2+\beta x\right) }{n^{2}}, \\
\left( P_{n}^{\beta }\psi _{x}^{3}\right) \left( x\right) &=&\frac{%
x^{3}\left( 2+3\beta x\right) }{n^{2}}+\frac{\beta x^{4}\left( 6+6\beta
x+\beta ^{2}x^{2}\right) }{n^{3}}, \\
\left( P_{n}^{\beta }\psi _{x}^{4}\right) \left( x\right) &=&\frac{3x^{4}}{%
n^{2}}+\frac{2x^{4}\left( 3+\beta x\right) \left( 1+3\beta x\right) }{n^{3}}+%
\frac{\beta x^{5}\left( 24+\beta x\left( 6+\beta x\right) ^{2}\right) }{n^{4}%
}.
\end{eqnarray*}

\begin{proof}[Proof of Lemma~\protect \ref{lemma-central-moments}]
By the binomial formula, we obtain 
\begin{equation*}
\left( P_{n}^{\beta }\psi _{x}^{s}\right) \left( x\right)
=\sum_{r=0}^{s}\left( -x\right) ^{s-r}\binom{s}{r}\left( P_{n}^{\beta
}e_{r}\right) \left( x\right) .
\end{equation*}%
Lemma~\ref{lemma-moments} leads to 
\begin{eqnarray*}
\left( P_{n}^{\beta }\psi _{x}^{s}\right) \left( x\right)
&=&x^{s}\sum_{k=0}^{s}\frac{1}{n^{k}}\sum_{j=0}^{k}\frac{\left( \beta
x\right) ^{j}}{j!}\sum_{i=0}^{k-j}\binom{j+i-1}{i} \\
&&\times \sum_{r=k}^{s}\left( -1\right) ^{s-r}\binom{s}{r}\frac{r!}{\left(
r-j-i\right) !}\left[ 
\begin{array}{c}
r-j-i \\ 
r-k%
\end{array}%
\right] .
\end{eqnarray*}%
The identity $\binom{s}{r}\frac{r!}{\left( r-j-i\right) !}=\frac{s!}{\left(
s-j-i\right) !}\binom{s-j-i}{r-j-i}$ implies that 
\begin{equation*}
\left( P_{n}^{\beta }\psi _{x}^{s}\right) \left( x\right)
=x^{s}\sum_{k=0}^{s}\frac{1}{n^{k}}\sum_{j=0}^{k}\frac{\left( \beta x\right)
^{j}}{j!}\sum_{i=0}^{k-j}\binom{j+i-1}{i}\frac{s!}{\left( s-j-i\right) !}%
C\left( k,s,j,i\right) ,
\end{equation*}%
where 
\begin{equation*}
C\left( k,s,j,i\right) =\sum_{r=k}^{s}\left( -1\right) ^{s-r}\binom{s-j-i}{%
r-j-i}\left[ 
\begin{array}{c}
r-j-i \\ 
r-k%
\end{array}%
\right] .
\end{equation*}%
By representation $\left( \ref{associated Stirling numbers of the first kind}%
\right) $, we have, for $0\leq j+i\leq k\leq r$, 
\begin{equation*}
\left[ 
\begin{array}{c}
r-j-i \\ 
r-k%
\end{array}%
\right] =\sum_{\ell =0}^{k-j-i}s_{2}\left( k-j-i+\ell ,\ell \right) \binom{%
r-j-i}{k-j-i+\ell }.
\end{equation*}%
Using $\binom{r-j-i}{k-j-i+\ell }=0$, for $j+i\leq r<k-\ell $, and the
identity $\binom{s-j-i}{r-j-i}\binom{r-j-i}{k-j-i+\ell }=\binom{s-j-i}{%
k-j-i+\ell }\binom{s-k-\ell }{r-k-\ell }$ we obtain 
\begin{equation*}
C\left( k,s,j,i\right) =\sum_{\ell =0}^{k-j-i}s_{2}\left( k-j-i+\ell ,\ell
\right) \binom{s-j-i}{k-j-i+\ell }\sum_{r=k+\ell }^{s}\left( -1\right) ^{s-r}%
\binom{s-k-\ell }{r-k-\ell }.
\end{equation*}%
Observing that 
\begin{equation*}
\sum_{r=k+\ell }^{s}\left( -1\right) ^{s-r}\binom{s-k-\ell }{r-k-\ell }%
=\sum_{r=0}^{s-k-\ell }\left( -1\right) ^{s-k-\ell -r}\binom{s-k-\ell }{r}%
=\left \{ 
\begin{tabular}{lll}
$0$ &  & $\left( k+\ell <s\right) ,$ \\ 
&  &  \\ 
$1$ &  & $\left( k+\ell =s\right) $%
\end{tabular}%
\right.
\end{equation*}%
we infer that 
\begin{equation*}
C\left( k,s,j,i\right) =\left \{ 
\begin{tabular}{lll}
$0$ &  & $\left( s-k>k-j-i\right) ,$ \\ 
&  &  \\ 
$s_{2}\left( s-j-i,s-k\right) $ &  & $\left( s-k\leq k-j-i\right) .$%
\end{tabular}%
\right.
\end{equation*}%
Hence, we obtain 
\begin{equation*}
\left( P_{n}^{\beta }\psi _{x}^{s}\right) \left( x\right)
=x^{s}\sum_{k=0}^{s}\frac{1}{n^{k}}\sum_{j=0}^{k}\frac{\left( \beta x\right)
^{j}}{j!}a\left( k,s,j\right) ,
\end{equation*}%
where 
\begin{equation*}
a\left( k,s,j\right) =\sum_{i=0}^{\min \left \{ k-j,2k-s-j\right \} }\binom{%
j+i-1}{i}\frac{s!}{\left( s-j-i\right) !}s_{2}\left( s-j-i,s-k\right)
\end{equation*}%
with the convention that a sum is to be read as zero if the lower index
exceeds the upper index. An index shift replacing $i$ with $i-j$ yields 
\begin{equation*}
a\left( k,s,j\right) =\sum_{i=j}^{\min \left \{ k,2k-s\right \} }\binom{i-1}{%
i-j}\frac{s!}{\left( s-i\right) !}s_{2}\left( s-i,s-k\right) .
\end{equation*}%
Finally, we conclude that $a\left( k,s,j\right) =0$ if $2k<s$. The latter
condition is equivalent to $k<\left \lfloor \left( s+1\right) /2\right \rfloor 
$.
\end{proof}

In the proof of the main result we apply a general approximation theorem of
Sikkema \cite[Theorem~3]{Sikkema-1970}. Since it deals with functions of
polynomial growth, we cannot apply it directly to functions $f\in E\left[
0,\infty \right) $. First, we note that the definition of $E\left[ 0,\infty
\right) $ implies that the function is bounded on each finite interval $%
\left[ 0,r\right] $ , $r>0$. In order to prove Theorem~\ref%
{theorem-expansion} for functions of (at most) exponential growth, we need a
localization result (Lemma~\ref{lemma-localization}) for the operators $%
P_{n}^{\beta }$. The next result will be applied in its proof.

For real numbers $A$, define $\exp _{A}\left( t\right) =e^{At}$.

\begin{lemma}
\label{lemma-exp}Let $x>0$. Then $\left( P_{n}^{\beta }\exp _{A}\right)
\left( x\right) $ is well defined, for $n>Ax$, and has the limit $%
\lim_{n\rightarrow \infty }\left( P_{n}^{\beta }\exp _{A}\right) \left(
x\right) =\exp _{A}\left( x\right) $.
\end{lemma}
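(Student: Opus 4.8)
The plan is to evaluate $\left(P_{n}^{\beta }\exp_{A}\right)\left(x\right)$ in closed form, starting from the representation $\left(\ref{definition-P}\right)$, and then to pass to the limit. Putting $f=\exp_{A}$ in $\left(\ref{definition-P}\right)$ and using $\exp_{A}\!\left(xt/n\right)=e^{Axt/n}$ yields, at least formally,
\[
\left(P_{n}^{\beta }\exp_{A}\right)\left(x\right)=e^{-\beta x}\sum_{\nu=0}^{\infty}\frac{\left(\beta x\right)^{\nu}}{\nu!\,\Gamma\left(n+\nu\right)}\int_{0}^{\infty}t^{n+\nu-1}e^{-t\left(1-Ax/n\right)}\,dt .
\]
Since $x>0$ and $\beta\geq0$, all the integrands and all the series coefficients are non-negative, so by Tonelli's theorem the interchange of summation and integration above is legitimate and both sides are finite or infinite simultaneously; thus establishing finiteness of the right-hand side amounts to showing that $\left(P_{n}^{\beta }\exp_{A}\right)\left(x\right)$ is well defined.

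Next I would compute the inner integral. The integral $\int_{0}^{\infty}t^{n+\nu-1}e^{-ct}\,dt$ converges if and only if $c:=1-Ax/n>0$, that is, if and only if $n>Ax$, and in that case equals $\Gamma\left(n+\nu\right)\left(1-Ax/n\right)^{-\left(n+\nu\right)}$. Substituting this, the Gamma factors cancel and the remaining series is an exponential series:
\[
\left(P_{n}^{\beta }\exp_{A}\right)\left(x\right)=e^{-\beta x}\left(1-\tfrac{Ax}{n}\right)^{-n}\sum_{\nu=0}^{\infty}\frac{1}{\nu!}\left(\frac{\beta x}{1-Ax/n}\right)^{\nu}=e^{-\beta x}\left(1-\tfrac{Ax}{n}\right)^{-n}\exp\!\left(\frac{\beta x}{1-Ax/n}\right).
\]
Using $\dfrac{1}{1-Ax/n}-1=\dfrac{Ax/n}{1-Ax/n}$, this simplifies to the closed form
\[
\left(P_{n}^{\beta }\exp_{A}\right)\left(x\right)=\left(1-\frac{Ax}{n}\right)^{-n}\exp\!\left(\frac{\beta A x^{2}/n}{1-Ax/n}\right),
\]
valid for all $n>Ax$.

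Finally, letting $n\rightarrow\infty$, the first factor tends to $e^{Ax}$ while the argument of the exponential in the second factor tends to $0$, so that $\left(P_{n}^{\beta }\exp_{A}\right)\left(x\right)\rightarrow e^{Ax}=\exp_{A}\left(x\right)$, which is the assertion. (For $A\leq0$ the condition $n>Ax$ holds for every $n\geq1$, so the statement is non-vacuous in that case as well; note also that for $\beta=0$ the second factor is identically $1$, recovering the familiar moment computation for the classical Post--Widder operator.) The only points requiring any care are the justification of the term-by-term integration and the observation that the inner integral is finite precisely when $n>Ax$; the remainder is a routine manipulation of Gamma integrals and the exponential series, and I foresee no genuine obstacle.
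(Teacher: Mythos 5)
Your proposal is correct and follows essentially the same route as the paper, which simply states the closed form $\left( P_{n}^{\beta }\exp _{A}\right) \left( x\right) =\exp \left( \beta x\,\tfrac{Ax}{n-Ax}\right) \left( 1-\tfrac{Ax}{n}\right) ^{-n}$ (identical to yours after simplification) as an ``elementary calculation'' and passes to the limit. You merely spell out the Gamma-integral evaluation, the Tonelli justification, and the observation that convergence of the integral forces $n>Ax$, all of which the paper leaves implicit.
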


\begin{proof}
Let $x>0$ and $n>Ax$. An elementary calculation shows that 
\begin{equation*}
\left( P_{n}^{\beta }\exp _{A}\right) \left( x\right) =\exp \left( \beta x%
\frac{Ax}{n-Ax}\right) \left( 1-\frac{Ax}{n}\right) ^{-n}\rightarrow \exp
_{A}\left( x\right)
\end{equation*}
as $n\rightarrow \infty $.
\end{proof}

\begin{lemma}
\label{lemma-localization}(Localization theorem) Let $\delta >0$ and fix $%
x>0 $. If a function $f\in E_{A}\left[ 0,\infty \right) $ vanishes on the
interval $\left( x-\delta ,x+\delta \right) \cap \left( 0,\infty \right) $,
then $\left( P_{n}^{\beta }f\right) \left( x\right) =O\left( n^{-m}\right) $
as $n\rightarrow \infty $,\ for arbitrarily large $m>0$.
\end{lemma}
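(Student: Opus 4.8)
The plan is to reduce the statement to a pointwise majorant for the kernel $W_{\beta }(n,x,\cdot )$ combined with elementary large-deviation bounds for a Gamma density. Since $f$ vanishes on $(x-\delta ,x+\delta )\cap (0,\infty )$, one has
\[
(P_{n}^{\beta }f)(x)=\int_{S_{\delta }}W_{\beta }(n,x,t)\,f(t)\,dt,\qquad S_{\delta }:=\{\,t>0:\ |t-x|\geq \delta \,\},
\]
so that, by the growth condition $|f(t)|\leq Ce^{At}$,
\[
|(P_{n}^{\beta }f)(x)|\leq C\int_{S_{\delta }}W_{\beta }(n,x,t)\,e^{At}\,dt .
\]

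The first step is a crude but convenient estimate of the kernel. Writing $(\ref{kernel-W})$ as $W_{\beta }(n,x,t)=e^{-\beta x}(n/x)^{n}e^{-nt/x}t^{n-1}\sum_{\nu \geq 0}(n\beta t)^{\nu }/(\nu !\,\Gamma (n+\nu ))$ and using $\Gamma (n+\nu )=(n+\nu -1)^{\underline{\nu }}\Gamma (n)\geq n^{\nu }\Gamma (n)$ for integers $n\geq 1$ and $\nu \geq 0$, one bounds the inner series by $\Gamma (n)^{-1}\sum_{\nu }(\beta t)^{\nu }/\nu !=e^{\beta t}/\Gamma (n)$. Hence
\[
W_{\beta }(n,x,t)\leq e^{\beta (t-x)}\,g_{n}(t),\qquad g_{n}(t):=\frac{(n/x)^{n}}{\Gamma (n)}\,t^{n-1}e^{-nt/x},
\]
where $g_{n}$ is the density of a Gamma law of shape $n$ and mean $x$ (it is the classical Post--Widder kernel for integer $n$). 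Setting $B:=A+\beta $ (recall $A>0$ and $\beta \geq 0$, so $B>0$), it therefore suffices to show that, for every $m$,
\[
\int_{S_{\delta }}g_{n}(t)\,e^{Bt}\,dt=O(n^{-m})\qquad (n\rightarrow \infty ).
\]

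To prove this I would regard $g_{n}$ as the density of a random variable $T_{n}$ and split $S_{\delta }$ into the lower part $(0,x-\delta ]$ (empty if $\delta \geq x$) and the upper part $[x+\delta ,\infty )$. On the lower part, $B>0$ gives $\int_{0}^{x-\delta }g_{n}(t)e^{Bt}\,dt\leq e^{B(x-\delta )}\Pr (T_{n}\leq x-\delta )$; on the upper part, the Cauchy--Schwarz inequality gives $\int_{x+\delta }^{\infty }g_{n}(t)e^{Bt}\,dt\leq (\mathbb{E}\,e^{2BT_{n}})^{1/2}\,\Pr (T_{n}\geq x+\delta )^{1/2}$, and $\mathbb{E}\,e^{2BT_{n}}=(1-2Bx/n)^{-n}$ stays bounded (it converges to $e^{2Bx}$) once $n>2Bx$. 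Both probabilities decay exponentially by the standard Chernoff bound for the Gamma law: optimizing $e^{-\theta (x\pm \delta )}\mathbb{E}\,e^{\theta T_{n}}=e^{-\theta (x\pm \delta )}(1-\theta x/n)^{-n}$ over $\theta $ yields $\Pr (T_{n}\geq x+\delta )\leq e^{-n(\delta /x-\log (1+\delta /x))}$ and $\Pr (T_{n}\leq x-\delta )\leq e^{-n(-\delta /x-\log (1-\delta /x))}$, both exponents being strictly positive. As an exponentially small quantity is $O(n^{-m})$ for every $m$, the displayed bound follows, and with it $(P_{n}^{\beta }f)(x)=O(n^{-m})$ for arbitrarily large $m$.

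The main obstacle here is conceptual rather than computational: because $f$ is merely of exponential growth, one cannot dominate $|f|$ by a fixed power of $\psi _{x}$ and invoke the central-moment estimate $(\ref{central-moments-asymptotic-relation})$, as one could for polynomially growing $f$. The device that circumvents this is exactly the Gamma-density majorant for the kernel together with the boundedness in $n$ of its exponential moments; once that is in place the statement is just large deviations for the Gamma law. One should also note that the crude kernel bound yields a finite integral only when $n>(A+\beta )x$, which is harmless since the conclusion concerns the limit $n\rightarrow \infty $.
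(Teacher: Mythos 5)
Your proof is correct, but it takes a genuinely different route from the paper's. The paper keeps everything inside the operator calculus it has already built: off the interval it inserts the factor $1\leq \delta ^{-2m}\left( t-x\right) ^{2m}$, applies the Cauchy--Schwarz inequality to split the integrand into $\left( P_{n}^{\beta }\exp _{2A}\right) \left( x\right) ^{1/2}$, which stays bounded by Lemma~\ref{lemma-exp}, and $\left( P_{n}^{\beta }\psi _{x}^{4m}\right) \left( x\right) ^{1/2}=O\left( n^{-m}\right) $, which comes from the central-moment estimate $\left( \ref{central-moments-asymptotic-relation}\right) $. (So your closing remark is slightly off the mark: one \emph{can} make the central-moment estimate work for exponentially growing $f$ --- not by dominating $\left \vert f\right \vert $ by a power of $\psi _{x}$, but by pairing $\psi _{x}^{2m}$ with $e^{At}$ via Cauchy--Schwarz; that is exactly the paper's device.) You instead majorize the kernel $\left( \ref{kernel-W}\right) $ pointwise, via $\Gamma \left( n+\nu \right) \geq n^{\nu }\Gamma \left( n\right) $, by $e^{\beta \left( t-x\right) }$ times the Gamma density of shape $n$ and mean $x$, and then run Chernoff bounds on the two tails; the optimizations and the exponents $\delta /x-\log \left( 1+\delta /x\right) $ and $-\delta /x-\log \left( 1-\delta /x\right) $ are correct, as is the use of Cauchy--Schwarz with $\mathbb{E}\,e^{2BT_{n}}=\left( 1-2Bx/n\right) ^{-n}$ on the upper tail. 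Your approach buys a strictly stronger conclusion (exponential decay in $n$ rather than $O\left( n^{-m}\right) $) and is independent of the central-moment machinery of Lemma~\ref{lemma-central-moments}; the paper's argument is shorter given that machinery, requires no tail analysis, and generalizes to operators whose kernels admit no convenient closed-form majorant.
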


\begin{proof}
Let $m\in \mathbb{N}$. For a certain positive constant $C$, we have 
\begin{eqnarray*}
\left \vert \left( P_{n}^{\beta }f\right) \left( x\right) \right \vert &\leq
&C\int_{\left( 0,\infty \right) \setminus \left( x-\delta ,x+\delta \right)
}W_{\beta }\left( n,x,t\right) e^{At}dt \\
&\leq &C\delta ^{-2m}\int_{0}^{\infty }W_{\beta }\left( n,x,t\right) \left(
t-x\right) ^{2m}e^{At}dt.
\end{eqnarray*}%
Application of the Schwarz inequality yields 
\begin{equation*}
\left \vert \left( P_{n}^{\beta }f\right) \left( x\right) \right \vert \leq
C\delta ^{-2m}\sqrt{\left( P_{n}^{\beta }\exp _{2A}\right) \left( x\right) }%
\sqrt{\left( P_{n}^{\beta }\psi _{x}^{4m}\right) \left( x\right) }.
\end{equation*}%
By Lemma~\ref{lemma-exp}, the first root has the finite limit $\exp
_{A}\left( x\right) $ as $n\rightarrow \infty $. By relation~$\left( \ref%
{central-moments-asymptotic-relation}\right) $, the second root satisfies $%
\sqrt{\left( P_{n}^{\beta }\psi _{x}^{4m}\right) \left( x\right) }=O\left(
n^{-m}\right) $ as $n\rightarrow \infty $. This completes the proof.
\end{proof}

\begin{proof}[Proof of Theorem~\protect \ref{theorem-expansion}]
Let $x\in \left( 0,\infty \right) $ and $q\in \mathbb{N}$. Suppose that the
function $f\in E\left[ 0,\infty \right) $ admits the derivative $f^{\left(
2q\right) }\left( x\right) $. We assume that $f\in E_{A}\left[ 0,\infty
\right) $, for some $A>0$. For a certain $\delta >0$, let $\widetilde{f}$ be
the function such that $\widetilde{f}=f$ on $\left( x-\delta ,x+\delta
\right) \cap \left( 0,\infty \right) $ and $\widetilde{f}\left( t\right) =0$
otherwise. Obviously, $\widetilde{f}\in E_{A}\left[ 0,\infty \right) $. By
relation~$\left( \ref{central-moments-asymptotic-relation}\right) $, we have 
$\left( P_{n}^{\beta }\psi _{x}^{2s}\right) \left( x\right) =O\left(
n^{-s}\right) $ as $n\rightarrow \infty $. Therefore, the general
approximation theorem due to Sikkema \cite[Theorem~3]{Sikkema-1970} implies
that 
\begin{equation*}
\left( P_{n}^{\beta }\widetilde{f}\right) \left( x\right) =\sum_{s=0}^{2q}%
\frac{1}{s!}\widetilde{f}^{\left( s\right) }\left( x\right) \left(
P_{n}^{\beta }\psi _{x}^{s}\right) \left( x\right) +o\left( n^{-q}\right) 
\text{ \qquad }\left( n\rightarrow \infty \right) .
\end{equation*}%
Application of the localization theorem (Lemma~\ref{lemma-localization})
with $m=q+1$ yields $\left( P_{n}^{\beta }\left( f-\widetilde{f}\right)
\right) \left( x\right) =O\left( n^{-q-1}\right) $ as $n\rightarrow \infty $%
. In view of $P_{n}^{\beta }f=P_{n}^{\beta }\widetilde{f}+P_{n}^{\beta
}\left( f-\widetilde{f}\right) $, we infer that $\left( P_{n}^{\beta
}f\right) \left( x\right) =\left( P_{n}^{\beta }\widetilde{f}\right) \left(
x\right) +o\left( n^{-q}\right) $ as $n\rightarrow \infty $. Because $%
\widetilde{f}$ and $f$ coincide in the interval $\left( x-\delta ,x+\delta
\right) $ we have $\widetilde{f}^{\left( s\right) }\left( x\right)
=f^{\left( s\right) }\left( x\right) $, for $s=0,\ldots ,2q$. Hence, both
functions satisfy the same asymptotic relation, i.e., 
\begin{equation*}
\left( P_{n}^{\beta }f\right) \left( x\right) =\sum_{s=0}^{2q}\frac{%
f^{\left( s\right) }\left( x\right) }{s!}\left( P_{n}^{\beta }\psi
_{x}^{s}\right) \left( x\right) +o\left( n^{-q}\right) \text{ }\qquad \left(
n\rightarrow \infty \right) .
\end{equation*}%
\newline
By Lemma~\ref{lemma-central-moments} and Eq.~$\left( \ref%
{central-moments-small-explicit representation}\right) $, and noting that $%
k\geq \left \lfloor \left( s+1\right) /2\right \rfloor $ is equivalent to $%
s\leq 2k$, we infer that 
\begin{eqnarray*}
&&\sum_{s=0}^{2q}\frac{f^{\left( s\right) }\left( x\right) }{s!}%
x^{s}\sum_{k=\left \lfloor \left( s+1\right) /2\right \rfloor }^{s}\frac{1}{%
n^{k}}\sum_{j=0}^{k}\frac{\left( \beta x\right) ^{j}}{j!}a\left( k,s,j\right)
\\
&=&\sum_{k=0}^{2q}\frac{1}{n^{k}}\sum_{s=k}^{2k}\frac{f^{\left( s\right)
}\left( x\right) }{s!}x^{s}\sum_{j=0}^{k}\frac{\left( \beta x\right) ^{j}}{j!%
}a\left( k,s,j\right) .
\end{eqnarray*}%
This completes the proof.
\end{proof}


\strut

\thispagestyle{empty}

~\vfill

\end{document}